\providecommand{\remove}[1]{}
\newcommand{\R}{\mathcal{R}}
\newcommand{\A}{\mathcal{A}}
\newcommand{\F}{\mathcal{F}}
\newcommand{\h}{\mathcal{H}}
\newtheorem{theorem}{Theorem}[section]
\newtheorem{proposition}[theorem]{Proposition}
\newtheorem{definition}[theorem]{Definition}
\newtheorem{remark}[theorem]{Remark}
\newtheorem*{theorem*}{Theorem}
\newtheorem*{lemma*}{Lemma}
\newtheorem*{proposition*}{Proposition}
\newtheorem{observation}[theorem]{Observation}
\begin{document}
\title{On the Union Complexity of Families of Axis-Parallel Rectangles with a Low Packing Number}

\author{Chaya Keller\thanks{Department of Mathematics, Ben-Gurion University of the NEGEV, Be'er-Sheva Israel. \texttt{kellerc@math.bgu.ac.il}. Research partially supported by Grant 635/16 from the Israel Science Foundation, the Shulamit Aloni Post-Doctoral Fellowship of the Israeli Ministry of Science and Technology, and by the Kreitman Foundation Post-Doctoral Fellowship.}
\and
Shakhar Smorodinsky\thanks{Department of Mathematics, Ben-Gurion University of the NEGEV, Be'er-Sheva Israel. \texttt{shakhar@math.bgu.ac.il}. Research partially supported by Grant 635/16 from the Israel Science Foundation.}
}

\date{}
\maketitle

\begin{abstract}
Let $\R$ be a family of $n$ axis-parallel rectangles with packing number $p-1$, meaning that among any $p$ of the rectangles, there are two with a non-empty intersection. We show that the union complexity of $\R$ is at most $O(n+p^2)$, and that the $(\leq k)$-level complexity of $\R$ is at most $O(kn+k^2p^2)$. Both upper bounds are tight.
\end{abstract}

\section{Introduction}

For a finite family $\mathcal{C}=\{C_1,C_2,\ldots,C_n\}$ of geometric objects in the plane, the \emph{union complexity} of $\mathcal{U(C)} = \cup_{i=1}^n C_i$ (or, in short, the union complexity of $\mathcal{C}$) is the number of \emph{vertices} on the boundary $\partial(\mathcal{U(C)})$, where a vertex is an intersection point of the boundaries of two objects $C_i,C_j \in \mathcal{C}$.\footnote{Formally, the definition of the union complexity is slightly more complex: it is the total number of faces of all dimensions of the arrangement of the boundaries of the
objects, which lie on the boundary of the union (see~\cite{APS08}). We use our simpler definition as in our context, both definitions are clearly equivalent up to a constant factor.} More generally, for any $k \geq 0$, the \emph{$(\leq k)$-level complexity} of $\mathcal{C}$ is the number of vertices that are contained in the interior of at most $k$ elements of $\mathcal{C}$.

Bounding the union complexity of families of geometric objects is useful for analyzing the running time of various algorithms, and has applications to linear programming, robotics, molecular modeling, and many other fields. In particular, Clarkson and Varadarajan~\cite{CV07} showed that if the union complexity of a family $\R$ of $r$ ranges with $VC$ dimension $\delta$ is sufficiently close to $O(r)$, then $\R$ has an $\epsilon$-net of size smaller than $O(\frac{\delta}{\epsilon}\log \frac{\delta}{\epsilon})$. Smorodinsky~\cite{S07} showed that bounds on the union complexity and on the level-1 complexity of families of geometric objects in the plane can be used in computing the proper chromatic number and the conflict-free chromatic number of the corresponding hypergraph. For more on union complexity, see the survey~\cite{APS08}.

For several families of geometric objects, it was shown that the union complexity is asymptotically lower than the trivial $O(n^2)$ bound. In particular,
Kedem et al.~\cite{KLPS86} showed that the union complexity of any family of $n$ pseudo-discs in the plane is at most $6n-12$, and Alt et al.~\cite{AFKMNSU92} and Efrat et al.~\cite{ERS93} proved a similar bound for any family of fat wedges. An almost linear bound for families of $\gamma$-fat triangles was obtained by Ezra et al.~\cite{EAS11}.

For a general family of axis-parallel rectangles in the plane, the union complexity can be quadratic -- e.g., if the family is an $\frac{n}{2}$-by-$\frac{n}{2}$ grid of long and thin rectangles. However, one may note that such a family contains as many as $n/2$ pairwise disjoint sets. Hence, it is natural to ask whether any family of axis-parallel rectangles with a quadratic union complexity must contain a linear-sized sub-family whose elements are pairwise disjoint.

%Hence, it is interesting to see which additional conditions on the family can assure a sub-quadratic union complexity.

In this note we answer this question on the affirmative. We show that the union complexity of any family $\R$ of axis-parallel rectangles is sub-quadratic if the \emph{packing number} of the family is sub-linear. Recall that the packing number of $\R$, denoted $\nu(\R)$, is $p-1$ if among any $p$ elements of $\R$, two have a non-empty intersection. Our main result is the following:
\begin{theorem}\label{Thm:Main}
Let $\R$ be a family of axis-parallel rectangles with packing number $\nu(\R)$. Then for any $k \geq 0$, the $(\leq k)$-level complexity of $\R$ is $O(kn+ k^2\nu(\R)^2)$. In particular, the union complexity of $\R$ is $O(n+ \nu(\R)^2)$.
\end{theorem}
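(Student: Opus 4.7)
My plan is to first establish the union complexity bound $O(n+\nu(\R)^2)$ (the $k=0$ case) and then lift it to the $(\le k)$-level bound by a standard Clarkson--Shor sampling argument. For the union complexity, I classify vertices of $\partial(\mathcal{U}(\R))$ into \emph{convex} corners---which are corners of individual rectangles of $\R$ and contribute at most $4n$---and \emph{concave} corners, each the intersection of a horizontal edge of some $R_i\in\R$ with a vertical edge of some $R_j\in\R$. By the obvious $90^\circ$-rotation symmetry it suffices to bound the number $N_{\mathrm{NE}}$ of ``NE-concave'' corners, those whose locally empty quadrant points to the north-east.

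The core structural step is the following local disjointness lemma: for any fixed $R_j\in\R$, if two rectangles $R_i,R_{i'}$ have top edges meeting $R_j$'s right edge at NE-concave corners of $\partial(\mathcal{U}(\R))$ with $T_i<T_{i'}$, then $R_i$ and $R_{i'}$ are $y$-disjoint, i.e.\ $B_{i'}>T_i$. Indeed, otherwise $R_{i'}$ would contain $R_j$ in its $x$-range, extend strictly to the right of $R_j$, and straddle the height $T_i$, so $R_{i'}$ would cover a neighbourhood of $(R_j,T_i)$ to the north-east, contradicting the empty-NE hypothesis. Hence the $R_i$'s hosted on each $R_j$'s right edge are pairwise disjoint and so at most $p-1$ by the packing hypothesis; symmetrically, the $R_j$'s on each $R_i$'s top edge are pairwise disjoint and also at most $p-1$. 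NE-concave corners thus form a bipartite incidence structure in which both the ``top'' and ``right'' sides have degree at most $p-1$, giving the weak bound $N_{\mathrm{NE}}\le(p-1)\cdot n$.

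To upgrade to the desired $O(n+p^2)$, I split NE-concave corners according to whether the two rectangles involved are each incident to only one such corner on the relevant edge (``light'' incidence) or to at least two (``heavy''). Light incidences contribute at most $2n$, absorbed into the $O(n)$ term. For the heavy incidences, let $I^{\ge 2}$ (respectively $J^{\ge 2}$) be the set of rectangles hosting at least two NE-concave corners on their top (resp.\ right) edge; it suffices to show $|I^{\ge 2}|,|J^{\ge 2}|=O(p)$, as then the heavy contribution is at most $(p-1)\,|J^{\ge 2}|=O(p^2)$. The intuition is that a heavy $R_j$ carrying NE-concave corners at heights $T_{i_1}<T_{i_2}$ forces every other rectangle $R_{j'}$ whose $x$-range extends strictly to the right of $R_j$ (so that $L_{j'}\le R_j<R_{j'}$) to have its $y$-range confined to one of three disjoint ``slots'' (below $T_{i_1}$, between $T_{i_1}$ and $T_{i_2}$, or above $T_{i_2}$); a right-to-left sweep combined with an exchange argument then extracts from $J^{\ge 2}$ a pairwise-disjoint sub-collection of size $|J^{\ge 2}|-O(1)$, forcing $|J^{\ge 2}|=O(p)$ via the packing hypothesis, and symmetrically for $I^{\ge 2}$. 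The hardest step will be this extraction: the local slot restrictions are clean, but showing that they compose into a genuinely pairwise-disjoint family rather than a merely compatibly-nested one requires careful bookkeeping. Once the union bound is established, the $(\le k)$-level bound $O(kn+k^2\nu(\R)^2)$ follows by a routine Clarkson--Shor argument: a random sub-sample of $\R$ of expected size $n/k$ has packing number at most $p-1$, so its union complexity is $O(n/k+p^2)$, and the standard moment computation on levels yields the claimed bound.
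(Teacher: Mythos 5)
Your setup is sound up to a point: the reduction of the $(\le k)$-level bound to the union-complexity bound via Clarkson--Shor is legitimate (the paper itself notes this alternative route in a remark, preferring the direct treatment only to get explicit constants), the split into convex and concave corners with the rotation reduction to NE-concave vertices (the paper's ``type L'') is fine, and your local disjointness lemma is correct: the rectangles whose top edges produce depth-$0$ type-L vertices on a fixed right edge have pairwise disjoint $y$-ranges, hence number at most $p-1$, giving the weak bound $(p-1)n$. The gap is in the upgrade to $O(n+p^2)$. You reduce everything to the claim $|J^{\ge 2}|=O(p)$ and propose to prove it by extracting from $J^{\ge 2}$ \emph{itself} a pairwise-disjoint subfamily of size $|J^{\ge 2}|-O(1)$. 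That extraction is impossible in general: take hosts $R_{j_t}=[0,t]\times[-a_t,a_t]$ for $t=1,\dots,N$ with $a_1>a_2>\dots>a_N>a_{N+1}:=0$, and for each $t$ attach two tiny rectangles whose top edges cross the line $x=t$ at heights $h_t^+\in(a_{t+1},a_t)$ and $h_t^-\in(-a_t,-a_{t+1})$. Each such crossing has depth $0$ and is NE-concave (rectangles $R_{j_s}$ with $s<t$ miss it in $x$, those with $s>t$ miss it in $y$), so all $N$ hosts lie in $J^{\ge 2}$; yet the hosts all contain the point $(1/2,0)$, so the largest disjoint subfamily of $J^{\ge 2}$ has size $1$. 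The packing number of the whole family is still about $2N$, but only because the $2N$ tiny \emph{hosted} rectangles are pairwise disjoint---the disjointness witness lives among the hosted rectangles, not the hosts. This is exactly the ``compatibly nested'' failure mode you flagged as needing bookkeeping, and it shows the step is not merely unfinished but false as formulated: the three-slot restriction permits all of $J^{\ge 2}$ to pile into a single nested chain.

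The paper's route sidesteps this by a different decomposition. It uses the packing hypothesis once, globally, to produce $p-1$ horizontal and $p-1$ vertical lines piercing every rectangle; each type-L vertex $x$ is charged to the pair $(A_x,h)$ where $h$ is the rightmost vertical piercing line meeting $B_x$; charges are either ``extremal'' (at most two per rectangle on the boundary of the union, giving $O(n)$) or ``inner'', and an inner charge forces $A_x$ to cross two consecutive vertical lines, after which a sweep within a single (horizontal floor, vertical line) cell shows that distinct inner charges there have strictly increasing depth---so at most one per cell lies on the union boundary, for $O(p^2)$ in total. If you want to salvage your heavy/light scheme, the missing lemma is some statement of the form ``either the hosts or the hosted rectangles supply $\Omega(|J^{\ge 2}|)$ pairwise disjoint sets,'' and proving that cleanly will likely recreate the paper's cell structure.
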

\noindent Both results are tight, as we show by an explicit example.\footnote{We note that our upper bound on the union complexity is not hereditary, in the sense that there may exist a sub-family of $\R$ (of size $\Theta(p)$) whose union complexity is quadratic in its number of elements. Another non-hereditary bound on the union complexity, for specific families of discs in the plane, was obtained recently by Aronov et al.~\cite{ADPS13}.}

\section{Proof of Theorem~\ref{Thm:Main}}
\label{sec:proof}

The proof of Theorem~\ref{Thm:Main} consists of several steps, and for convenience we divide them into separate subsections. We start with a few definitions and notations.

\subsection{Definitions and Notations}
\label{sec:def}

Throughout this note, $\R$ denotes a family of axis-parallel rectangles in the plane, and we assume that $\R$ is in \emph{general position}, meaning that no two rectangles have more than 4 common points (i.e., no two rectangles share a segment of the boundary; this implies that no three boundaries intersect at the same point). Put $\nu(\R)=p-1$, so any $p$ rectangles in $\R$ contain two with a non-empty intersection.

For any $x \in \mathbb{R}^2$, the \emph{depth} of $x$, denoted $\mathrm{depth}(x)$, is the number of rectangles in $\R$ that contain $x$ as an interior point. Let $Y_0$ be the set of \emph{vertices} (i.e., intersections of pairs of boundaries) of depth $0$, and for $k \geq 0$, let $Y_{\leq k}$ be the set of vertices of depth at most $k$. Of course, $|Y_0|$ is the union complexity of $\R$ and $|Y_{\leq k}|$ is the $(\leq k)$-level complexity of $\R$.

Intersection points of boundaries of two axis-parallel rectangles can be partitioned into four types, depicted in Figure~\ref{fig:types-of-intersection}. The type described in Figure~\ref{fig:types-of-intersection}(a) (in which the intersection point is the rightmost-upmost point of the intersection of the rectangles) will be called \emph{type L} intersection. We denote by $X_0$ the set of all points of type $L$ in $Y_0$, and by $X_{\leq k}$ the set of all points of type $L$ in $Y_{\leq k}$.

For any intersection point $x$ of type $L$, we denote by $A_x$ the rectangle to whose \emph{upper} boundary $x$ belongs, and by $B_x$ the rectangle to whose \emph{right} boundary $x$ belongs.

\begin{figure}[tb]
\begin{center}
\scalebox{0.6}{
\includegraphics{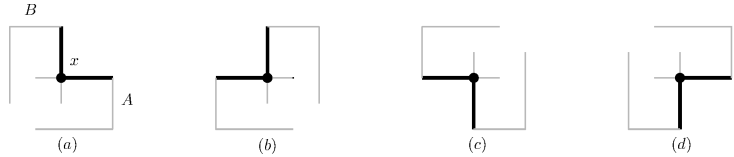}
} \caption{Types of intersection of pairs of axis-parallel rectangles in general position.}
\label{fig:types-of-intersection}
\end{center}
\end{figure}

%Consider the family $\R'$ of all projections of elements of $\R$ to the $x$ axis. $\R'$ is a family of segments on the line that satisfies the $(p,2)$ property (meaning that among every $p$ segments, there are two with a non-empty intersection), and hence, by the classical Hadwiger-Debrunner theorem~\cite{HD57}, there exists a set of $p-1$ points that \emph{pierces} $\R'$ (i.e., intersects any segment in $\R'$). Therefore, there exists a set of $p-1$ vertical lines that pierces all elements of $\R$. We pick such a set of lines $H=h_1,h_2,\ldots,h_{p-1}$, and use it throughout the note.

\subsection{Partition of $\R$ into floors}
\label{sec:sub:floors}

Let $R_1 \in \R$ be the rectangle whose upper boundary is the lowest (i.e., has the smallest $y$ coordinate) among the rectangles in $\R$. If there are several such rectangles, we choose one of them arbitrarily. Denote by $\ell_1$ the horizontal line that contains the upper boundary of $R_1$.

Define inductively a sequence $\{\ell_i\}_{2 \leq i \leq p-1}$ as follows. Let $R_i$ be the rectangle whose upper boundary is the lowest between all elements of $\R$ whose lower boundary is above $\ell_{i-1}$. (Again, if there are several such rectangles, we pick one of them arbitrarily.) Denote by $\ell_i$ the horizontal line that contains the upper boundary of $R_i$. Note that by the construction, the rectangles $\{R_i\}_{1 \leq i \leq p-1}$ are pairwise disjoint. As $\nu(\R)=p-1$, this implies that $\R$ does not contain any rectangle whose lower boundary is above $\ell_{p-1}$. Let $\ell_p$ be a horizontal line that lies above the upper boundaries of all the rectangles in $\R$ (such a line clearly exists as $\R$ is finite and all its rectangles are compact).

We now define the partition of $\R$ into floors: we say that $R \in \R$ belongs to floor $i$, $1 \leq i \leq p-1$, if the upper boundary of $R$ is above or contained in $\ell_i$ and \emph{lower} than $\ell_{i+1}$. We denote the set of all rectangles in floor $i$ ($1 \leq i \leq p-1$) by $\F_i$.
%and let $n_i=|\F_i|$.
It is clear from the construction that $\{\F_i\}_{1 \leq i \leq p-1}$ is a partition of $\R$ into $p-1$ pairwise disjoint families. In addition, we need the following observation:
\begin{observation}\label{Obs:Floors}
For any $1 \leq i \leq p-1$, if $R \in \F_i$ then $R \cap \ell_i \neq \emptyset$. Furthermore, $i$ is the largest index such that $R$ intersects $\ell_i$.
\end{observation}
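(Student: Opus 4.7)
The plan is to unpack the definitions of the floors and of the inductive sequence $\{\ell_i\}$; both assertions should then follow directly, with no serious obstacle. A preliminary fact I would record is the strict monotonicity $\ell_1 < \ell_2 < \cdots < \ell_{p-1} < \ell_p$ in $y$-coordinate: by construction the lower boundary of $R_{i+1}$ lies strictly above $\ell_i$, so its upper boundary $\ell_{i+1}$ also lies strictly above $\ell_i$, and $\ell_p$ was chosen to lie above every rectangle in $\R$.

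For the first assertion, I would fix $R \in \F_i$. By definition the upper boundary of $R$ lies on or above $\ell_i$, so to conclude $R \cap \ell_i \neq \emptyset$ it suffices to show that the lower boundary of $R$ lies on or below $\ell_i$ (the rectangle is axis-parallel, so this immediately forces $\ell_i$ to cut a horizontal segment out of $R$). I would argue this by contradiction: assume the lower boundary of $R$ lies strictly above $\ell_i$. If $i \leq p-2$, then $R$ is a valid candidate at the inductive step that picks $R_{i+1}$, so by the minimality property of that step one would get $\ell_{i+1} \leq$ (upper boundary of $R$), contradicting the requirement in the definition of $\F_i$ that the upper boundary of $R$ lie strictly below $\ell_{i+1}$. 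If instead $i = p-1$, the existence of such an $R$ is already ruled out in the paragraph preceding the observation, since together with $R_1,\ldots,R_{p-1}$ it would yield $p$ pairwise disjoint rectangles, violating $\nu(\R) = p-1$.

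For the second assertion, the defining condition of $\F_i$ places the upper boundary of $R$ strictly below $\ell_{i+1}$, so the whole rectangle $R$ lies strictly below $\ell_{i+1}$; combined with the monotonicity of the $\ell_j$'s recorded above, this yields $R \cap \ell_j = \emptyset$ for every $j > i$, and the claimed maximality of $i$ follows. The only delicate point worth flagging is the edge case $i = p-1$ of the first assertion, where the minimality argument no longer applies and one must invoke the packing hypothesis $\nu(\R)=p-1$ directly (exactly as the paper already does just before the observation); otherwise the proof is a direct unwinding of the inductive construction.
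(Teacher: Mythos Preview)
Your proposal is correct and follows essentially the same approach as the paper's own proof: argue by contradiction that the lower boundary of $R$ cannot lie strictly above $\ell_i$ (using the minimality in the choice of $R_{i+1}$), then combine with the defining condition on the upper boundary. You are in fact slightly more careful than the paper, which writes ``by the definition of $\ell_{i+1}$'' uniformly, whereas you correctly separate out the edge case $i=p-1$ and invoke the packing hypothesis there; you also spell out the maximality assertion and the monotonicity of the $\ell_j$'s, which the paper leaves implicit in the phrase ``the assertion follows.''
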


\begin{proof}
Let $R \in \F_i$. If the lower boundary of $R$ is above $\ell_i$ then by the definition of $\ell_{i+1}$, the upper boundary of $R$ cannot lie strictly below $\ell_{i+1}$, a contradiction. Hence, the lower boundary of $R$ is either below $\ell_i$ or on $\ell_i$. As the upper boundary of $R$ is either on $\ell_i$ or above $\ell_i$ and also lower than $\ell_{i+1}$, the assertion follows.
\end{proof}

Observation~\ref{Obs:Floors} implies that $\R$ is \emph{pierced} by the set of lines $\mathcal{L} = \{\ell_1,\ldots,\ell_{p-1}\}$, meaning that each $R \in \R$ has a non-empty intersection with (at least) one of the lines. A similar argument shows that there exists a set $\h = \{h_1,h_2,\ldots,h_{p-1}\}$ of vertical lines (arranged in increasing order of the $x$ coordinate) that pierces $\R$. This set will be used, along with $\mathcal{L}$, in the sequel.

\subsection{Classification of the intersection points of type $L$}
\label{sec:sub:contributed}

In what follows, we obtain an upper bound on $|X_{\leq k}|$, i.e., the number of intersection points of type $L$ and depth $\leq k$. (By symmetry, this will imply an upper bound on the $(\leq k)$-level complexity of $\R$.) As a preparation, we classify the intersection points of type $L$. %Each such intersection point $x$ is formed by a rectangle $A \in \R$ such that $x$ lies on the upper boundary of $A$, and a rectangle $B \in \R$ such that $x$ lies on the right boundary of $B$. Our division is performed according to $A$, and to the rightmost vertical line $h_i \in \h$ that intersects $B$.

\begin{definition}
%Let $x \in X_{\leq k}$ be obtained as an upmost-rightmost point in $A \cap B$ for some $A,B \in \R$, and assume that $x$ lies on the upper boundary of $A$ and on the right boundary of $B$ (see Figure~\ref{fig:types-of-intersection}(a)).
Let $x \in X_{\leq k}$. Denote by $h_x$ the rightmost amongst the vertical lines in the set $\{h \in \h: h \cap B_x \neq \emptyset\}$. We say that $x$ is \emph{$(A_x,h_x)$-contributed}.

For $A \in \R$, we say that $x$ is $A$-contributed if there exists $h \in \h$ such that $x$ is $(A,h)$-contributed. Conversely, for $h \in \h$, we say that $x$ is $h$-contributed if there exists $A \in \R$ such that $x$ is $(A,h)$-contributed (see Figure~\ref{Fig:Contributed}(a)).
\end{definition}

\begin{observation}\label{Obs:Aux}
\begin{enumerate}
\item For any given $A,h$, and for any $0 \leq s \leq k$, there exists at most one point $x$ with $\mathrm{depth}(x)=s$ that is $(A,h)$-contributed.

\item It may be that $x$ that is $(A,h)$-contributed but $A \cap h = \emptyset$ (see Figure~\ref{Fig:Contributed}(b)).
\end{enumerate}
\end{observation}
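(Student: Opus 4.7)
My plan for part 1 is a proof by contradiction. Assume $x_1 \neq x_2$ are both $(A,h)$-contributed with $\mathrm{depth}(x_1) = \mathrm{depth}(x_2) = s \leq k$. Since $A_{x_1}=A_{x_2}=A$, both points lie on the upper boundary of $A$ and share the $y$-coordinate $y_A$; WLOG $x_1$ is strictly to the left of $x_2$. By replacing the pair with a closer pair if needed, I further assume that no $(A,h)$-contributed point of depth at most $k$ lies strictly between them on this edge.

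The first step is a geometric asymmetry: $x_1 \in \mathrm{int}(B_{x_2})$ while $x_2 \notin B_{x_1}$. The conditions $h \cap B_{x_i} \neq \emptyset$ together with $B_{x_i}$ having its right edge at the $x$-coordinate of $x_i$ yield $l_{B_{x_2}} \leq h_x \leq x_{1,x} < x_{2,x} = r_{B_{x_2}}$, so $x_1$ sits horizontally strictly inside $B_{x_2}$; vertically, the type $L$ condition at $x_2$ and general position place $y_A$ strictly between $B_{x_2}$'s bottom and top. Symmetrically $x_2$ lies to the right of $B_{x_1}$. Hence $B_{x_2}$ is counted by $\mathrm{depth}(x_1)$ but not by $\mathrm{depth}(x_2)$, producing a $+1$ asymmetry that needs to be cancelled by other rectangles if the depths are equal.

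The second step uses the maximality of $h = h_{x_2}$: any $\mathcal{H}$-line in $(h_x, x_{2,x}]$ would intersect $B_{x_2}$, contradicting that $h$ is the rightmost $\mathcal{H}$-line hitting $B_{x_2}$. Combined with the piercing hypothesis, this forces every rectangle $R \notin \{A,B_{x_1},B_{x_2}\}$ with right edge in $(x_{1,x}, x_{2,x})$ and vertical range strictly containing $y_A$ to itself intersect $h$; then $(r_R, y_A)$ is an $(A,h)$-contributed type $L$ vertex strictly between $x_1$ and $x_2$, contradicting the consecutivity assumption. So no such $R$ exists, which in particular controls the rectangles in $\mathcal{I}_{x_1}\setminus\mathcal{I}_{x_2}$.

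The third and main step — which I expect to be the chief technical hurdle — is the analogous control over rectangles with \emph{left} edge in $(x_{1,x}, x_{2,x})$, which are precisely the ones potentially counted by $\mathrm{depth}(x_2)$ but not by $\mathrm{depth}(x_1)$. Any such $R$ lies strictly to the right of $h$, so its piercing $\mathcal{H}$-line must lie to the right of $h$ and, by step 2, beyond $x_{2,x}$. Tracking the depth function along the upper edge of $A$ from $x_1$ to $x_2$ and combining the depth-change equation with a careful accounting of the piercing structure of these $R$'s should force $\mathrm{depth}(x_1) > \mathrm{depth}(x_2)$, giving the desired contradiction. Part 2 requires no argument beyond Figure~\ref{Fig:Contributed}(b): a thin $B_x$ pierced by $h$ can meet a short $A_x$ whose horizontal extent lies entirely on one side of $h$, so that $A \cap h = \emptyset$ while $x$ is still $(A,h)$-contributed.
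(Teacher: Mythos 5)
Your Step 1 is correct and is exactly the geometric heart of the matter: if $x_1,x_2$ are both $(A,h)$-contributed and $x_1$ lies to the left of $x_2$, then $x_1\in\mathrm{int}(B_{x_2})$ while $x_2$ lies outside $B_{x_1}$. The paper gives no explicit proof of the Observation, but this nesting fact is the argument it implicitly relies on (it is spelled out inside the proof of Proposition~\ref{Prop:Inner-contributions}): ordering the $(A,h)$-contributed points $x_1,x_2,\ldots$ from right to left, each $x_m$ lies in the interior of $B_{x_{m'}}$ for every $m'<m$, hence $\mathrm{depth}(x_m)\geq m-1$, and therefore at most $k+1$ of these points have depth at most $k$. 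That count is the only consequence of the Observation that the paper ever uses (in the ``extremal contributions'' paragraph of the proof of Theorem~\ref{Thm:Exact}), and it follows in two lines from your Step 1; you should stop there.

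Your Steps 2--3 instead try to establish the statement literally, i.e., that distinct $(A,h)$-contributed points have distinct depths, by forcing $\mathrm{depth}(x_1)>\mathrm{depth}(x_2)$ for a consecutive pair. This is a genuine gap: Step 3 is only a hope (``should force''), and the inequality it hopes for is false. Nothing forbids a rectangle $C$ whose left edge lies strictly between $x_1$ and $x_2$, which contains $x_2$ but not $x_1$ in its interior, and which is pierced by a line of $\h$ far to the right of $x_2$; your own Step 3 correctly deduces that such a $C$ must extend past $x_2$, but that is perfectly satisfiable and costs nothing in packing number, and such a $C$ exactly cancels the $+1$ asymmetry coming from $B_{x_2}$. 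Concretely, take $A=[-5,20]\times[0,10]$, $B_{x_1}=[-1,5]\times[5,50]$, $B_{x_2}=[-2,15]\times[3,60]$, $C=[10,110]\times[2,70]$, and $\h$ the two vertical lines at $x=5$ and $x=110$ (here $\nu=2$, so $|\h|=2$ is the right size and $\h$ pierces all four rectangles): then $x_1=(5,10)$ and $x_2=(15,10)$ are both $(A,h_1)$-contributed and both have depth exactly $1$, since $x_1$ is interior only to $B_{x_2}$ and $x_2$ is interior only to $C$. So the exact-depth phrasing of part~1 cannot be proved; only the ``at most $k+1$ points of depth $\leq k$ per pair $(A,h)$'' version is available, and that is what the nesting argument delivers. (A smaller defect in Step 2: the vertex $(r_R,y_A)$ you manufacture need not have depth $\leq k$, so it does not contradict the consecutivity assumption as you formulated it.) Your treatment of part~2 is fine, as it is an existence claim witnessed by Figure~\ref{Fig:Contributed}(b).
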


\begin{definition}
An $(A,h)$-contributed point $x$ is called an \emph{inner contribution} of $A$ if there exist points $y,z$ and lines $h \neq h',h'' \in \h$, such that:
\begin{itemize}
\item $y$ is $(A,h')$-contributed and $z$ is $(A,h'')$-contributed, and

\item $x$ lies strictly between $y$ and $z$. (Note that all of $x,y,z$ belong to the upper boundary of $A$. This induces a natural ordering between them.)
\end{itemize}
If there are no such points, $x$ is called an \emph{extremal contribution} of $A$ (see Figure~\ref{Fig:Contributed}(c)).
\end{definition}

\begin{figure}[tb]
\begin{center}
\scalebox{0.6}{
\includegraphics{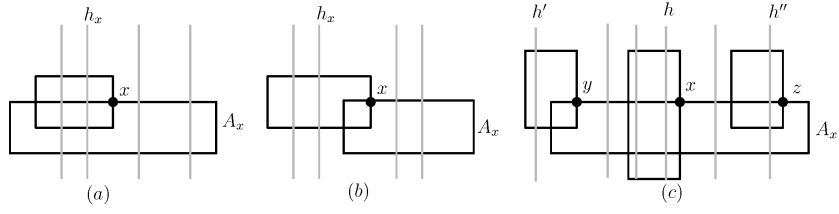}
} \caption{An auxiliary figure for Section~\ref{sec:sub:contributed}. In~(a) and~(b), the point $x$ is $(A_x,h_x)$-contributed. In~(c), $x \in X_0$ is an inner contribution of $A$.}
\label{Fig:Contributed}
\end{center}
\end{figure}

The following observation is crucial in the sequel.
\begin{observation}\label{Obs:Inner-contribution}
Let $x \in X_{\leq k}$ be an $(A,h_{i})$-contributed intersection point. If $x$ is an inner contribution of $A$, then $A$ intersects both $h_i$ and $h_{i+1}$.
\end{observation}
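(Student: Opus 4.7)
The plan is to exploit the extremality of $h_i$, $h'$, and $h''$ --- each is the rightmost line in $\h$ meeting its respective $B$-rectangle --- to pin $A$'s horizontal span around both $h_i$ and $h_{i+1}$. The preliminary observation is that $h_i$ being the rightmost line in $\h$ meeting $B_x$, together with $x$ lying on the right edge of $B_x$, forces the $x$-coordinate of $x$ to lie strictly in the open vertical strip between $h_i$ and $h_{i+1}$. By the inner-contribution hypothesis I would label $y, z$ so that $y$ lies on the upper boundary of $A$ strictly to the left of $x$ and $z$ strictly to the right; the horizontal extent of $A$ therefore contains the $x$-coordinates of both $y$ and $z$.

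For the claim that $A$ meets $h_i$, I would first show $h' < h_i$ by ruling out $h' \geq h_{i+1}$: if the latter held, then since $h'$ meets $B_y$ its $x$-coordinate would be at most the $x$-coordinate of $y$, which is strictly less than that of $x$, which is strictly less than that of $h_{i+1}$, a contradiction. Then, as $h'$ is the rightmost element of $\h$ meeting $B_y$ and $h_i > h'$, the line $h_i$ misses $B_y$; the only way for $h_i$ to miss $B_y$ while lying to the right of $h'$ (which does meet $B_y$) is to sit strictly to the right of $B_y$'s right edge, i.e., strictly right of $y$'s $x$-coordinate. Combining this with $h_i$ being strictly left of $x$'s $x$-coordinate, $h_i$ lies in the horizontal span of $A$.

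For the claim that $A$ meets $h_{i+1}$, the argument is symmetric but needs an extra case distinction. I would rule out $h'' < h_i$ as follows: if this held, then by the same extremality argument as above, $h_i$ would be forced to lie strictly to the right of $B_z$'s right edge, i.e., strictly right of $z$'s $x$-coordinate; but $z$ lies to the right of $x$ on $A$'s upper boundary and $x$'s $x$-coordinate is strictly greater than that of $h_i$, yielding the contradiction. Hence $h'' \geq h_{i+1}$, and since $h''$ meets $B_z$, the $x$-coordinate of $h_{i+1}$ is at most that of $h''$, which is at most that of $z$; combining with the trivial lower bound coming from $y$, the line $h_{i+1}$ again lies inside $A$'s horizontal span.

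The only non-routine step is ruling out $h'' < h_i$ in the $h_{i+1}$ half of the argument; everything else is direct bookkeeping of what ``rightmost intersector in $\h$'' says about lines that fail to meet a given rectangle. The essential geometric picture is that an inner contribution wedges the upper boundary of $A$ between two ``lateral'' contributions $y$ and $z$, which together stretch $A$ horizontally just far enough to cross both consecutive vertical lines $h_i$ and $h_{i+1}$ of the piercing set $\h$.
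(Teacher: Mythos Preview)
Your proof is correct and follows essentially the same line as the paper's. Both arguments hinge on the observation that any $(A,h_m)$-contributed point has $x$-coordinate in the half-open strip $[h_m,h_{m+1})$, and then use the witnesses $y,z$ from the inner-contribution hypothesis to trap the horizontal span of $A$. The paper packages this as a clean lemma (``if an $(A,h_m)$-contributed point exists then $h_{m+1}$ is right of $l_A$ and $h_m$ is left of $r_A$'') and then simply asserts that $y$ is $(A,h_{i-s_1})$-contributed and $z$ is $(A,h_{i+s_2})$-contributed with $s_1,s_2\geq 1$, leaving implicit the verification that the left witness carries the smaller index and the right witness the larger one. You prove exactly that implication explicitly via your two case-eliminations, so your write-up is in fact slightly more careful than the paper's on this point.

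One cosmetic remark: the ``strictly'' in your opening claim that $x$ lies in the open strip between $h_i$ and $h_{i+1}$ is not quite justified on the $h_i$ side (one only gets $h_i\leq x$-coordinate of $x$, as the paper itself notes later in the proof of Proposition~\ref{Prop:Inner-contributions}); but you never actually use the strict inequality on that side, so this does not affect the argument.
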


\begin{proof}
Denote the vertical lines that contain the left and right boundaries of $A$ by $l_A$ and $r_A$, respectively. Note that if for some $m$ there exists an $(A,h_m)$-contributed point $\bar{x}$, then the line $h_{m+1}$ must lie to the right of $l_A$ (as otherwise, $B_{\bar{x}}$ must intersect $h_{m+s}$ for some $s \geq 1$, contradicting the assumption that $\bar{x}$ is contributed by $h_m$). On the other hand, $h_m$ must lie to the left of $r_A$, since it intersects $B_{\bar{x}}$ and the right boundary of $B_{\bar{x}}$ is to the left of $r_A$ (as the intersection point $\bar{x}$ is of type $L$, see Figure~\ref{fig:types-of-intersection}(a)).

In our case, as $x$ is an inner contribution of $A$, there exist some $s_1,s_2 \geq 1$ and points $y,z$ such that $y$ is $(A,h_{i-s_1})$-contributed and $z$ is $(A,h_{i+s_2})$-contributed. By the previous paragraph, the former implies that $h_{i-s_1+1}$ lies to the right of $l_A$ while $h_{i+s_2}$ lies to the left of $r_A$. As $s_1,s_2 \geq 1$, this implies that both $h_i$ and $h_{i+1}$ lie to the right of $l_A$ and to the left of $r_A$, and thus, both intersect $A$, as asserted.
\end{proof}

\subsection{Upper bound on `inner contributions' to the $(\leq k)$-level complexity of $\R$}
\label{sec:sub:floors-contribution}

%It is clear from the definitions that for any $R \in \R$, at most two intersection points are extremal contributions to $R$, and thus overall, at most $2|\R|$ elements of $X_{\leq k}$ are extremal contributions. Therefore, it will be sufficient for us to obtain an upper bound on the number of elements of $X_{\leq k}$ that are inner contributions.
In this subsection we obtain an upper bound on the number of elements of $X_{\leq k}$ that are inner contributions, by considering pairs of the form (Floor $\F_i$, vertical line $h_j$) separately, and for each such pair, upper bounding the number of $(A,h_j)$-contributed points for $A \in \F_i$ that are inner contributions.
%We now consider all extremal points contributed by rectangles of the same floor, and use the observations presented above to obtain an upper bound on their contribution to the $(\leq k)$-level complexity of $\R$.
\begin{proposition}\label{Prop:Inner-contributions}
For $k \geq 0$ and $1 \leq i,j \leq p-1$, let
\[
S_k^{i,j} = \{x \in X_{\leq k}: \exists A \in \F_i, x \mbox{ is } (A,h_j)\mbox{-contributed and } x \mbox{ is an inner contribution of } A\}.
\]
(Informally, $S_k^{i,j}$ is the set of all contributions to the level $\leq k$ complexity, that are contributed by $h_j$ on the $i$'th floor in an `inner' way). Then for all $i,j$,
\begin{equation}\label{Eq:Floors-contribution1}
|S_k^{i,j}| \leq \frac{(k+1)(k+2)}{2}.
\end{equation}
\end{proposition}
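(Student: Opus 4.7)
The plan is to bound $|S_k^{i,j}|$ by showing that every rectangle contributing to $S_k^{i,j}$ must contain one specific horizontal segment, and then to run a depth-pigeonhole argument based on the vertical ordering of the upper boundaries of these rectangles.

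First I would observe that every $A \in \F_i$ with an inner contribution at $(A, h_j)$ must contain the horizontal segment $L$ that is the portion of $\ell_i$ between the lines $h_j$ and $h_{j+1}$. Indeed, Observation~\ref{Obs:Floors} yields $A \cap \ell_i \neq \emptyset$, Observation~\ref{Obs:Inner-contribution} yields that $A$ meets both $h_j$ and $h_{j+1}$, and since $A$ is an axis-parallel rectangle these three intersections force $A \supseteq L$.

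Next I would enumerate the rectangles contributing to $S_k^{i,j}$ as $A_1, \ldots, A_N$, ordered so that the $y$-coordinates $u_1 < u_2 < \cdots < u_N$ of their upper boundaries are strictly increasing (strictness coming from general position). Observation~\ref{Obs:Aux}(1) then assigns to each pair $(m,s)$ with $1 \le m \le N$ and $0 \le s \le k$ at most one $(A_m, h_j)$-contributed point of depth $s$; denote it $x_m^{(s)}$ when it exists.

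The key claim I would then establish is that for every $m' > m$, the point $x_m^{(s)}$ lies in the strict interior of $A_{m'}$. Vertically, $x_m^{(s)}$ sits at height $u_m$; since $A_{m'} \supseteq L$, the lower boundary of $A_{m'}$ is at height $\le y_{\ell_i} \le u_m$, while its upper boundary is at $u_{m'} > u_m$. Horizontally, $x_m^{(s)}$ is on the right boundary of $B_{x_m^{(s)}}$, and since $h_{j+1}$ does not meet $B_{x_m^{(s)}}$ its $x$-coordinate lies in the half-open strip bounded by $h_j$ and $h_{j+1}$; meanwhile the horizontal extent of $A_{m'}$ covers that whole strip. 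Hence $x_m^{(s)}$ lies in the closed rectangle $A_{m'}$, and general position (no three boundaries meet at a point) rules out $x_m^{(s)} \in \partial A_{m'}$. Consequently $\mathrm{depth}(x_m^{(s)}) \ge N-m$, and combined with $\mathrm{depth}(x_m^{(s)}) = s$ this forces $m \ge N - s$.

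Summing over $s \in \{0, 1, \ldots, k\}$, for each $s$ at most $s+1$ values of $m$ are eligible, yielding
\[
|S_k^{i,j}| \le \sum_{s=0}^{k}(s+1) = \frac{(k+1)(k+2)}{2}.
\]
The main obstacle I expect is the verification of the key claim---in particular, using the general-position hypothesis to upgrade the weak inequalities produced by the common-segment observation to the strict interior containment of $x_m^{(s)}$ in $A_{m'}$, which is what makes the depth-pigeonhole step valid.
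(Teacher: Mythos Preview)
Your proposal is correct and follows essentially the same route as the paper: both arguments hinge on the fact that every $A\in\F_i$ carrying an inner $(A,h_j)$-contribution must intersect $\ell_i$, $h_j$, and $h_{j+1}$ (your segment $L$), so that ordering these rectangles by the height of their upper edge forces any contributed point on a lower rectangle to lie in the interior of every higher one. The only organizational difference is the final count: the paper fixes a rectangle $A_l$, bounds $|Q_l|\le k-l+2$ via an additional $B$-containment argument among the points of $Q_l$, and sums over $l$; you instead fix the depth $s$, invoke Observation~\ref{Obs:Aux}(1) directly to get at most one point per $(A_m,s)$, and sum over $s$. Since Observation~\ref{Obs:Aux}(1) is exactly what the paper's $B$-containment step is establishing, the two countings are the same computation read in two directions.
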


\begin{figure}[tb]
\begin{center}
\scalebox{0.6}{
\includegraphics{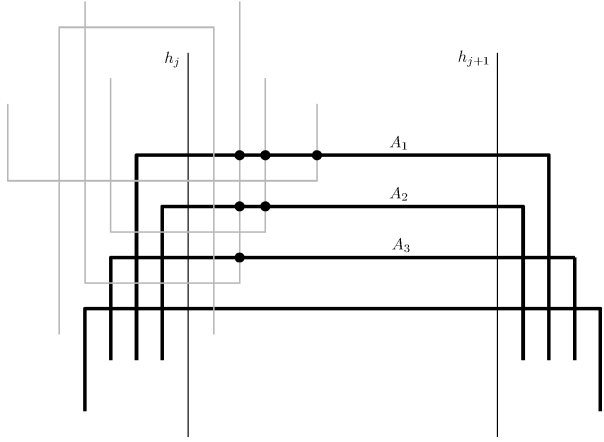}
} \caption{An illustration to the proof of Proposition~\ref{Prop:Inner-contributions}.}
\label{fig:Floors-contribution}
\end{center}
\end{figure}

\begin{proof}
Fix $1 \leq j \leq p-1$. Define, for any $1 \leq i \leq p-1$,
\[
\A_i = \{A \in \F_i: \exists (A,h_j)\mbox{-contributed } x \in X_{\leq k} \mbox{ that is an inner contribution of } A\}.
\]
(Informally, $\A_i$ is the set of all rectangles on the $i$'th floor, whose upper edge contains an inner contribution to the level $\leq k$ complexity, contributed by $h_j$.)
Denote $|\A_i|=m_i$, and let the elements of $\A_i=\{A_1,A_2,\ldots,A_{m_i}\}$ be ordered in descending order of the height of the upper boundary, as demonstrated in Figure~\ref{fig:Floors-contribution}. (So, $A_1$ is the rectangle whose upper boundary is the highest, $A_2$'s upper boundary is the second highest, etc.. Note that equality cannot occur here as by Observation~\ref{Obs:Inner-contribution}, any $A \in \A_i$ intersects both $h_j$ and $h_{j+1}$, and so, if two of these rectangles had upper boundaries of the same height, they would share part of the boundary, contradicting the assumption that the elements of $\R$ are in general position.)

\medskip \noindent For each $1 \leq l \leq m_i$, denote
\[
Q_l=\{x \in X_{\leq k}: x \mbox{ is } (A_l,h_j) \mbox{-contributed}\}.
\]
Note that we have
\begin{equation}\label{Eq:Floors-contribution2}
|S_k^{i,j}| \leq |\{x \in X_{\leq k}: \exists A \in \A_i \mbox{ such that } x \mbox{ is } (A,h_j)\mbox{-contributed}\}| = \sum_{l=1}^{m_i} |Q_l|.
\end{equation}
Let $x \in Q_l$. We claim that for each $1 \leq r \leq l-1$, $x$ is an interior point of $A_r$. To see this, we need several simple observations.
\begin{enumerate}
\item Any $(A_l,h_j)$-contributed $x$ lies between the lines $h_j$ (inclusive) and $h_{j+1}$ (non-inclusive). Indeed, as $x$ lies on the right boundary of $B_x$ and $h_j$ intersects $B_x$, $x$ must lie either on $h_j$ or to the right of $h_j$. On the other hand, if $x$ lies on $h_{j+1}$ or on the right of $h_{j+1}$, then $B_x$ must intersect $h_{j+s}$ for some $s \geq 1$, a contradiction.

\item Any such $x$ lies above or on the line $\ell_{i}$, since it belongs to the upper boundary of $A_l \in \F_i$.

\item Each of the rectangles $A_1,\ldots,A_{m_i}$ intersects $\ell_i$ by Observation~\ref{Obs:Floors}, and intersects both $h_j$ and $h_{j+1}$ by Observation~\ref{Obs:Inner-contribution}.
\end{enumerate}
By the simple observations, for each $1 \leq r \leq l-1$, the rectangle $A_r$ intersects $\ell_i$, $h_j$ and $h_{j+1}$, and its upper boundary lies above $x$ (since $x$ lies on the upper boundary of $A_l$). As $x$ lies between the lines $h_j$ and $h_{j+1}$ and above $\ell_i$, it follows that $x$ is an interior point of $A_r$.

\medskip \noindent Now, arrange the elements of $Q_l = \{x_1,x_2,\ldots\}$ in descending order of the $x$ coordinate (i.e., $x_1$ is the rightmost one, $x_2$ is the second-to-right, etc. Such an ordering is possible, since all elements of $Q_l$ belong to the upper boundary of $A_l$).  For each $x \in Q_l$, $B_x$ intersects $h_j$ (since $x$ is $(A_l,h_j)$-contributed). Thus, $x_{m}$ is included in the interior of $B_{x_{m'}}$ for any $m>m'$. In addition, for any $1 \leq t \leq l-1$, all elements of $Q_l$ are interior points of $A_t$. Therefore, for any $m \geq 1$, we have $\mathrm{depth}(x_m) \geq (m-1)+(l-1)=l+m-2$. As all points in $Q_l$ are of depth $\leq k$, this implies $|Q_l| \leq k-l+2$ for any $1 \leq l \leq m_i$. Summing over all values of $l$ and using~\eqref{Eq:Floors-contribution2}, we obtain
\begin{equation}
|S_k^{i,j}| \leq \sum_{l=1}^{m_i} |Q_l| \leq (k+1)+k+\ldots+1 = \frac{(k+1)(k+2)}{2},
\end{equation}
as asserted.
\end{proof}

\subsection{Finalizing the proof of Theorem~\ref{Thm:Main}}
\label{sec:sub:final}

Now we are ready to prove Theorem~\ref{Thm:Main}. Actually, we prove the following exact version of the theorem:
\begin{theorem}\label{Thm:Exact}
Let $\R$ be a family of $n$ axis-parallel rectangles with $\nu(\R)=p-1$. For any $k \geq 0$, the $(\leq k)$-level complexity of $\R$ is at most $8(k+1)n+2(p-1)(p-3)(k+1)(k+2)$. In particular, the union complexity of $\R$ is at most $8n+4(p-1)(p-3)$.
\end{theorem}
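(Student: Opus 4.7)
The plan is to reduce the theorem to a bound on the type-$L$ intersection points from Section~\ref{sec:def} and then split those into the inner and extremal parts of Section~\ref{sec:sub:contributed}. By the symmetry of the four intersection types in Figure~\ref{fig:types-of-intersection}, the argument for type $L$ can be replayed after horizontal and/or vertical reflection of the plane (both operations preserve axis-parallelism and the packing number); any upper bound we prove for $|X_{\leq k}|$ therefore applies, mutatis mutandis, to the analogous count for each of the other three types, so $|Y_{\leq k}|$ is at most $4$ times whatever bound we establish. It thus suffices to prove
\[
|X_{\leq k}| \;\leq\; 2(k+1)n + \tfrac{1}{2}(p-1)(p-3)(k+1)(k+2).
\]

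For the \emph{inner} part of $X_{\leq k}$, the plan is to sum Proposition~\ref{Prop:Inner-contributions} over the relevant pairs $(i,j)$. Observation~\ref{Obs:Inner-contribution}, together with the requirement $s_1,s_2\geq 1$ in the definition of inner contribution, forces an $(A,h_j)$-contributed inner point to use two lines $h_{j-s_1}$ and $h_{j+s_2}$ of $\h=\{h_1,\ldots,h_{p-1}\}$, and hence requires $j\in\{2,\ldots,p-2\}$. Since $i$ ranges over the $p-1$ floors, the number of pairs with $S_k^{i,j}$ possibly non-empty is $(p-1)(p-3)$, each contributing at most $(k+1)(k+2)/2$ by the proposition. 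This yields at most $\tfrac{1}{2}(p-1)(p-3)(k+1)(k+2)$ inner type-$L$ points.

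For the \emph{extremal} part, the plan is to argue per rectangle. Fix $A\in\R$ and let $J_A=\{j : \exists\,(A,h_j)\text{-contributed } x \in X_{\leq k}\}$. As noted inside the proof of Proposition~\ref{Prop:Inner-contributions}, every $(A,h_j)$-contributed point lies between the lines $h_j$ (inclusive) and $h_{j+1}$ (exclusive), so the contributions of $A$ are ordered along its upper boundary in blocks indexed by increasing $j\in J_A$. Any contribution with intermediate label $j$ (i.e.\ $\min J_A<j<\max J_A$) is then sandwiched on $\partial A$ between a contribution from $h_{\min J_A}$ and one from $h_{\max J_A}$, so it is automatically an inner contribution of $A$. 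Hence the extremal contributions of $A$ are produced by at most the two lines $h_{\min J_A}$ and $h_{\max J_A}$, and by Observation~\ref{Obs:Aux}(1) each of them yields at most $k+1$ points of depth $\leq k$. Summing over $A\in\R$ gives at most $2(k+1)n$ extremal type-$L$ points in $X_{\leq k}$.

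Adding the inner and extremal bounds and multiplying by $4$ gives the $(\leq k)$-level statement of Theorem~\ref{Thm:Exact}; setting $k=0$ yields the union-complexity bound $8n+4(p-1)(p-3)$. The only step that requires care is the extremal count: one must verify that the strip structure really does order the contributed points on $\partial A$ by the $j$-index, and that among those the only non-inner ones are precisely those in the leftmost and rightmost occupied strips $h_{\min J_A}$ and $h_{\max J_A}$.
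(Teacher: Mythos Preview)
Your proposal is correct and follows essentially the same route as the paper's proof: reduce to type-$L$ points by the fourfold symmetry, bound the inner contributions by summing Proposition~\ref{Prop:Inner-contributions} over the $(p-1)(p-3)$ relevant pairs $(\F_i,h_j)$ with $2\leq j\leq p-2$, and bound the extremal contributions by $2(k+1)$ per rectangle via Observation~\ref{Obs:Aux}(1). Your elaboration of why the extremal $A$-contributions come only from $h_{\min J_A}$ and $h_{\max J_A}$ (using the strip ordering of contributed points along $\partial A$) is exactly the content the paper compresses into the single sentence ``by the definition of inner and extremal contributions, all $A$-contributed points that are extremal contributions belong to one of two vertical lines.''
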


\begin{proof}
By symmetry considerations, the $(\leq k)$-level complexity of $\R$ is at most $4|X_{\leq k}|$, so it is sufficient to prove
\begin{equation}\label{Eq:Final1}
|X_{\leq k}| \leq 2(k+1)n+(p-1)(p-3)(k+1)(k+2)/2.
\end{equation}
We prove~\eqref{Eq:Final1} by upper bounding the inner contributions and the extremal contributions separately.

\medskip \noindent \textbf{Inner contributions.} By Proposition~\ref{Prop:Inner-contributions}, for each $i,j$, the number of inner contributions that correspond to $\F_i$ and $h_j$ is at most $(k+1)(k+2)/2$. For $j \in \{1,p-1\}$, any $h_j$-contributed $x$ is an extremal contribution. Hence, the number of inner contributions that correspond to $\F_i$ is at most $(p-3)(k+1)(k+2)/2$, and so, the total number of inner contributions is at most $(p-1)(p-3)(k+1)(k+2)/2$.

\medskip \noindent \textbf{Extremal contributions.} Let $A \in \R$. By the definition of inner and extremal contributions, all $A$-contributed points that are extremal contributions belong to one of two vertical lines. By Observation~\ref{Obs:Aux}, for any single pair $(A,h)$, $X_{\leq k}$ contains at most $k+1$ $(A,h)$-contributed points. Therefore, the number of $A$-contributed points that are extremal contributions is at most $2(k+1)$. It follows that the total number of extremal contributions is at most $2(k+1)n$. This completes the proof.
\end{proof}

\begin{remark}
If one is interested in the $k$-level complexity of $\R$ (instead of the $(\leq k)$-level complexity we treat), the same proof method can be used to show that it is at most $O(n+kp^2)$, and that this is tight for the example presented in Figure~\ref{fig:tightness} below.
\end{remark}

\begin{remark}
We note that an alternative way to prove Theorem~\ref{Thm:Main} is to first obtain an upper bound on the union complexity of $\R$ and then deduce an upper bound on the $(\leq k)$-level complexity by the classical technique of Clarkson and Shor~\cite{CS89}. We preferred to treat the $(\leq k)$-level complexity directly, as this allows obtaining the slightly better exact result of Theorem~\ref{Thm:Exact} with almost the same effort.
\end{remark}

\section{Tightness of Theorem~\ref{Thm:Main}}
\label{sec:tightness}

In this section we present a family $\R$ of $n$ axis-parallel rectangles with $\nu(\R)=p-1$ whose $(\leq k)$-level complexity is $\Theta(nk+k^2p^2)$, thus showing that Theorem~\ref{Thm:Main} is tight (up to a constant factor).

The family $\R$, presented in Figure~\ref{fig:tightness}, is a disjoint union of two subfamilies of $n/2$ rectangles each.

\begin{figure}[tb]
\begin{center}
\scalebox{0.6}{
\includegraphics{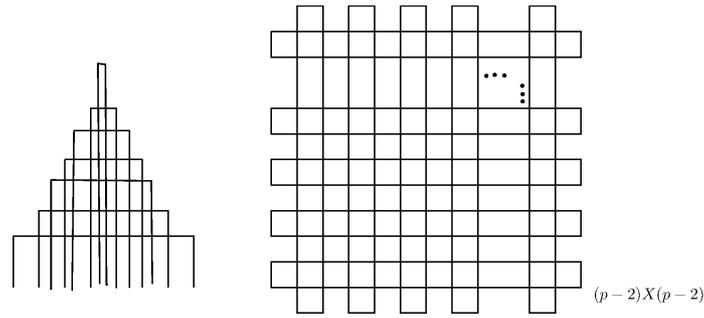}
} \caption{A family of axis-parallel rectangles that demonstrates the tightness of Theorem~\ref{Thm:Main}.}
\label{fig:tightness}
\end{center}
\end{figure}

The subfamily drawn in the left of the figure consists of a sequence of pairwise-intersecting rectangles in which each rectangle is taller and thinner than its successor. This subfamily contributes $O(kn)$ points to the $(\leq k)$-level complexity of $\R$.

The subfamily drawn in the right of the figure is based on an $(p-2)$-by-$(p-2)$ grid of long thin rectangles. We replace each rectangle in the basic grid with $\frac{n}{4(p-2)}$ nested copies to obtain a family of $n/2$ rectangles (for simplicity, we assume $4(p-2)|n$; note that only the basic grid is depicted in the figure). This subfamily contributes $\Theta(k^2p^2)$ points to the $(\leq k)$-level complexity of $\R$.

Hence, the $(\leq k)$-level complexity of $\R$ is $\Theta(nk+k^2p^2)$, as asserted.

%\begin{remark}
%The subfamily $\R_2$ drawn in the right of Figure~\ref{fig:tightness} emphasizes that the assertion of Theorem~\ref{Thm:Main} is not hereditary. Indeed, for $n=O(p)$, $\R_2$ satisfies $\nu(\R_2)=p$, and still, the union complexity of $\R_2$ is quadratic in its size.
%\end{remark}

\end{document}